\date{}
\newtheorem{Teo}{Theorem}
\newtheorem{car}{Corollary}
\newtheorem{defin}[Teo]{Definition}
\begin{document}

\title{{\large \bf Some remarks about disjointly homogeneous symmetric spaces}}
\author {Sergey V. Astashkin}

\date{}

\maketitle

Department of Mathematics, Samara National Research University, Moskovskoye shosse 34, 443086, Samara, Russia

\renewcommand{\thefootnote}{\fnsymbol{footnote}}
\footnotetext{$^*$The work was supported by the Ministry of Education and Science of the Russian Federation, 
project 1.470.2016/1.4 and by the RFBR grant 18--01--00414.}

\footnotetext{2010 \emph{Mathematics Subject Classification}:
46E30, 46B70, 46B42.}

\footnotetext {\emph{Key words and phrases}: symmetric space, $p$-disjointly homogeneous lattice, restricted $p$-disjointly homogeneous lattice, Lions-Peetre interpolation space, isomorphism}

\maketitle

\def\mn{||w_m||_2}
\def\no{\sum_{k=n_m}^{n_{m+1}-1}}
\def\op{\frac{dt}{{\psi}(t)}}
\def\pq{\log_2^{1/2}{(4/t)}}
\def\qr{\max_{0\leq m\leq r}}
\def\rs{\max_{n_m\leq k<n_{m+1}}}
\def\st{\biggl\|\sum_{m=0}^{r}\,c_mv_m\biggr\|}
\def\tz{\int_0^1\,\frac{d\psi(s)}{\varphi(s)}}
\def\zx{\sum_{k=0}^{\infty}\,\frac{\psi(2^{-k})-\psi(2^{-k-1})}{\varphi(2^{-k})}}
\def\xy{\sum_{k=0}^{\infty}\,\frac{a_k}{\sqrt{S_k}\varphi(2^{-k})}}
\def\ya{M(\tilde{\rho})\subset{\Lambda(\psi)}}


\begin{abstract}
\noindent Let $1\le p<\infty$. A symmetric space $X$ on $[0,1]$ is said to be $p$-disjointly homogeneous (resp.  restricted $p$-disjointly homogeneous) if every sequence of normalized pairwise disjoint functions from $X$ (resp. characteristic functions) contains a subsequence equivalent in $X$ to the unit vector basis of $l_p$. Answering a question posed in the paper \cite{HST}, we construct, for each $1\le p<\infty$, a restricted $p$-disjointly homogeneous symmetric space, which is not $p$-disjointly homogeneous. Moreover, we prove that  the property of $p$-disjoint homogeneity is preserved under Banach isomorphisms.
\end{abstract}

\maketitle

\vspace{5 mm}

\section{Introduction}\label{Intro}

\noindent


A Banach lattice $E$ is called {\it disjointly homogeneous} (shortly DH) if two arbitrary sequences of normalized pairwise disjoint elements in $E$ contain equivalent subsequences. In particular, given $1\le p\le\infty$, a Banach lattice is  {\it $p$-disjointly homogeneous} (shortly p-DH) if each  normalized disjoint sequence has a subsequence equivalent to the unit vector basis of $l_p$ ($c_0$ when $p=\infty$). These notions were first introduced in \cite{FTT-09} and proved to be very useful in studying the general problem of identifying Banach lattices $E$ such that the ideals of strictly singular and compact operators  in $E$ coincide \cite{bib:04} (see also survey \cite{FHT-survey} and references therein). Results obtained there can be treated as a continuation and development of a classical theorem of V.~D.~Milman \cite{Mil-70} which states that every strictly singular operator in $L_p(\mu)$ has compact square.

Recently, in \cite{HST}, in the setting of symmetric spaces it was introduced a weaker property of restricted disjoint homogeneity. A symmetric space $X$ on $[0,1]$ is said to be {\it restricted $2$-disjointly homogeneous} (shortly restricted 2-DH) if every sequence of normalized disjoint characteristic functions contains a subsequence 
equivalent to the unit vector basis of $l_2$. Clearly, each 2-DH symmetric space is restricted 2-DH. In \cite{HST}, the authors proved the converse for Orlicz spaces \cite[Theorem~5.1]{HST} and also asked whether a symmetric space $X$ on $[0,1]$ which is restricted 2-DH, must be 2-DH. This question (repeated also in \cite[p.~19]{FHT-survey}) was  motivated by the fact that restricted 2-DH symmetric spaces have rather "good"\;properties. In particular, they are stable under duality \cite[Proposition~3.7]{HST} while this is still open problem for 2-DH symmetric spaces, see \cite[p.~20]{FHT-survey}. Moreover, every symmetric space isomorphic (as a Banach space) to a 2-DH symmetric space $Y$ is restricted 2-DH \cite[Corollary~3.6]{HST}. Observe that an analogous result for the 2-DH property was unknown. 

In this paper we solve the problem whether a restricted 2-DH symmetric space on $[0,1]$ is also 2-DH in the negative. More precisely, given $1\le p,q<\infty$ we construct a restricted $p$-DH symmetric space $Z_{p,q}$ on $[0,1]$, which contains a sequence of pairwise disjoint functions equivalent to the unit vector basis of $l_q$. Clearly, if $p\ne q$ the space $Z_{p,q}$ is not DH. We show also that in the case when $1< p,q<\infty$ the space $Z_{p,q}$ is reflexive. Moreover,  $Z_{p,1}$, $1\le p<\infty$, is a disjointly complemented space (i.e., every sequence of pairwise disjoint functions from $Z_{p,1}$ has a subsequence whose span is complemented in $Z_{p,1}$).

At the same time, by using a deep result by Kalton
on uniqueness of rearrangement invariant structures \cite[Theorem~7.4]{K-93} (see also \cite[Theorem~5.7]{Tz}), we prove that in the setting of symmetric spaces on $[0,1]$ the $p$-DH property is preserved under Banach isomorphisms, i.e., if $1\le p<\infty$ then each symmetric space isomorphic to a $p$-DH symmetric space is also $p$-DH.

\section{Preliminaries}\label{Intro2}

In this section, we shall briefly list the
definitions and notions used throughout this paper. For more detailed information, we refer to the monographs \cite{LT-79,KPS,bs}.

A Banach space $(X,\Vert \cdot\Vert _{_{X}})$ of real-valued
Lebesgue measurable functions (with identification $m $-a.e.) on the interval $[0,1]$ is called {\it symmetric (or rearrangement invariant)} if
\begin {enumerate}
\item[(i).]  $X$ is an ideal lattice, that is, if $y\in X$ and $x$ is any measurable function on $[0,1]$ with $\vert x\vert
\leq \vert y\vert $, then $x\in X$ and $\Vert x\Vert _{_{X}} \leq \Vert y\Vert _{_{X}};$
\item[(ii).]  $X$ is symmetric in the sense that if
 $y\in X$, and if $x$
is any measurable function on $[0,1]$ with $x^*=y^*$, then $x\in
X$ and $\Vert x\Vert _{_{X}} = \Vert y\Vert _{_{X}}$.
\end{enumerate}
\bigskip
\noindent Here, $x^*$ denotes the non-increasing, right-continuous rearrangement of a measurable function $x$ on $[0,1]$ given by
$$
x^{*}(t)=\inf \{~s\ge 0:\,m\{u\in [0,1]: |x(u)|>s\}\le t~\},\quad t>0,
$$
where $m $ is the usual Lebesgue measure.

For any symmetric space  $X$ on $[0,1]$ we have $L_\infty [0,1]\subseteq X\subseteq L_1[0,1]$. The fundamental function $\phi_X$ of a symmetric space $X$ is defined by  $\phi_X(t):=\|\chi_{[0,t]}\|_X$. In what follows $\chi_A$ is the characteristic function of a set $A$.

The {\it K\"othe dual} (or the {\it associated} space) $X'$ of a symmetric space $X$  consists of all measurable functions $y$, for which
$$
\Vert y\Vert _{_{X'}}:= \sup \Big\{\int _{0}^1\vert
x(t)y(t)\vert dt:\ x\in X,\ \Vert x \Vert _{_{X}}\leq 1\Big\}
<\infty.
$$
If $X^*$ denotes the Banach dual of $X$, then $X' \subset X^{*}$ and $X'=X^{*}$ if and only if the
norm $\Vert \cdot \Vert _{_{X}}$ is order-continuous, i.e., from
$\{x_{n}\}\subseteq X, x_{n} \downarrow _n 0$, it follows that
$\Vert x_{n}\Vert _{_{X}}\rightarrow 0$. Note that the norm
$\Vert \cdot \Vert _{_{X}}$ of the symmetric space $X$
 is order-continuous if and only if $X$ is separable (see e.g. \cite[Ch.~IV, Theorem~3.3]{KA77}). 
We denote by $X_0$ the closure of $L_\infty$ in $X$ (the {\it separable part} of $X$). The space
$X_0$ is symmetric, and it is separable if $X\ne L_\infty$.


Let us recall some classical examples of symmetric spaces on $[0,1]$. Denote by $\Omega$ the set of all increasing concave functions $\varphi$ on $[0,1]$ such that $\varphi(0)=0$. Every $\varphi\in\Omega$ and $1\le q<\infty$ generate the {\it Lorentz} space $\Lambda_q(\varphi)$ endowed with the norm
\[
\|x\|_{\Lambda_q(\varphi)}:=\Big(\int\limits_0^1 x^*(t)^q\,d\varphi(t)\Big)^{1/q}.\]
We set $\Lambda(\varphi):=\Lambda_1(\varphi)$.

If $\varphi\in\Omega$ then the {\it Marcinkiewicz} space $M(\varphi)$ consists of all measurable functions $x$ such that
\begin{equation*}
 \|x\|_{M(\varphi)}:=\sup\limits_{0<\tau\leq
1}\frac{1}{\varphi(\tau)}\int\limits_0^\tau x^*(t)\,dt<\infty.
\end{equation*}
The space $M(\varphi)$ is not separable provided that $\lim_{\tau\to 0}\varphi(\tau)=0$ (or, equivalently, $M(\varphi)\ne L_1$). At the same time, its subspace $M_0(\varphi)$, consisting of all $x\in M(\varphi)$ such that
$$\lim_{\tau\to 0}\frac{1}{\varphi(\tau)}\int_0^\tau x^*(t)\,dt=0,$$
is a separable symmetric space which, in fact, coincides with the separable part $(M(\varphi))_0$. We have $(\Lambda(\varphi))'=M(\varphi)$ and $(M(\varphi))' = (M_0(\varphi))'= \Lambda(\varphi)$ \cite[Theorems~II.5.2 and II.5.4]{KPS}.

For each $\varphi\in\Omega$ the spaces $\Lambda(\varphi)$ and  $M(\bar\varphi)$, where $\bar\varphi(t):=t/\varphi(t)$, are the smallest and the largest ones in the class of all symmetric spaces with the fundamental function $\varphi$, i.e., $\Lambda(\varphi)\subset X\subset M(\bar\varphi)$
whenever $X$ is a symmetric space such that $\phi_X=\varphi$ \cite[Theorems~II.5.5 and II.5.7]{KPS}.

The behaviour of a function $\varphi\in\Omega$ is essentially determined by the numbers
\[
 \gamma_\varphi:= \lim\limits_{t\to+0}
 \frac{\ln M_\varphi(t)}{\ln t}\;\;\mbox{and}\;\;
 \delta_\varphi:= \lim\limits_{t\to\infty}
 \frac{\ln M_\varphi(t)}{\ln t},
\]
where 
\[
 M_\varphi(t):=\sup\limits_{0<s\leq \min
 \left(1, \frac1t\right)} \frac{\varphi(ts)}{\varphi(s)}.
\]
For each $\varphi\in\Omega$ the inequalities $0\le \gamma_\varphi\le \delta_\varphi\le 1$ hold \cite[\S\,II.1]{KPS}.
In the case when $\gamma_\varphi>0$ we have
\begin{equation*}\label{Prel: Eq2}
||x||_{M(\varphi)}\asymp \sup_{0<t\leq 1}(x^*(t)\bar{\varphi}(t))
\end{equation*}
with constants independent of $x\in M(\varphi)$ (see \cite[Theorem~2.5.3]{KPS}).


Let $1<q<\infty$ and let $X$ be a symmetric space on $[0,1]$. Denote by $X^{(q)}$ the {\it $q$-convexification} of $X$ defined as $X^{(q)}:=\{x\;\mbox{measurable on}\;[0,1]:\;|x|^q\in X\}$ with the norm $\|x\|_{X^{(q)}}:=\|\,|x|^q\,\|_X^{1/q}$ (see \cite[p.~53]{LT-79}). 

Next, we will make use of the {\it real interpolation method} \cite{BL}. For a pair of symmetric spaces $(X_0, X_1)$ the {\it Peetre K-functional} of an element $x \in X_0+X_1$ is defined for $t > 0$ by
$$
K(t, x; X_0, X_1) = \inf \{ \| x_0\|_{X_0} + t \| x_1\|_{X_1}: x = x_0 + x_1, x_0 \in X_0, x_1 \in X_1 \}.
$$
Then, the real Lions-Peetre interpolation spaces are defined as follows
$$
(X_0, X_1)_{\theta, p} = \{ x \in X_0+X_1: \| x \|_{\theta, p} = \Big(\int_0^{\infty} [t^{-\theta} K(t, x; X_0, X_1)]^p \frac{dt}{t}\Big)^{1/p}
< \infty \}
$$
if $0 < \theta < 1$ and $1 \leq p < \infty$, and
$$
(X_0, X_1)_{\theta, \infty} = \{ x \in X_0+X_1: \| x \|_{\theta, \infty} = \sup_{t > 0} \frac {K(t, x; X_0, X_1)}{t^{\theta} } < \infty \}
$$
if $0 \leq \theta \leq 1$.

Convergence in measure (resp. in weak topology) of a
sequence of measurable functions $\{x_n\}_{n=1}^\infty$
(resp. from a symmetric space $X$) to a measurable function $x$
(resp. from $X$) is denoted by $x_n \stackrel{m}{\to} x$
(resp. $x_n \stackrel{w}{\to}x$).
The notation   $A\asymp B$ will mean that there exist constants $C>0$ and $c>0$ not depending on the arguments of $A$ and $B$ such that $c{\cdot}A\le B\le C{\cdot}A$. 
Moreover, throughout the paper $\|f\|_p:=\|f\|_{L_p[0,1]}$, $1\le p\le\infty$. 


\section{Restricted $p$-DH symmetric spaces which are not $p$-DH}

We start with the following definitions. 

\begin{defin}\cite{FTT-09}
A symmetric space $X$ on $[0,1]$ is {\it disjointly homogeneous} (shortly DH) if two arbitrary normalized disjoint sequences from $X$ contain equivalent subsequences.

Given $1\le p\le\infty$, a symmetric space $X$ on $[0,1]$ is called  {\it $p$-disjointly homogeneous} (shortly p-DH) if each  normalized disjoint sequence has a subsequence equivalent in $X$ to the unit vector basis of $l_p$ ($c_0$ when $p=\infty$).
\end{defin}

For examples and other information related to DH and $p$-DH symmetric spaces and Banach lattices see \cite{FTT-09,bib:04,FHSTT,FHT-survey,HST,A15}.


\begin{defin}\cite{HST}
Let $1\le p\le\infty$. A symmetric space $X$ on $[0,1]$ is said to be {\it restricted $p$-DH} if for every sequence of pairwise disjoint subsets $\{A_n\}_{n=1}^\infty$ of $[0,1]$ there is a subsequence $\{A_{n_k}\}$ such that $\{\frac{1}{\|\chi_{A_{n_k}}\|_X} \chi_{A_{n_k}}\}$ is equivalent to the unit vector basis of $l_p$ ($c_0$ when $p=\infty$).
\end{defin}

\begin{defin}\cite{FHSTT}
A symmetric space $X$ on $[0,1]$ is called {\it disjointly complemented} ($X\in DC$) if every disjoint sequence from $X$ has a subsequence whose span is complemented in $X$.
\end{defin}

Clearly, each $p$-DH symmetric space is restricted $p$-DH. In \cite{HST}, there was posed the question whether a symmetric space $X$, which is restricted $p$-DH, is $p$-DH (see also \cite[p.~19]{FHT-survey}). The following theorem solves this problem in the negative.

\begin{Teo}\label{main}
Let $1\le p,q<\infty$. There exists a restricted $p$-DH symmetric space $Z_{p,q}$ on $[0,1]$, which contains a sequence of pairwise disjoint functions $\{g_m\}$ equivalent to the unit vector basis of $l_q$ such that the closed linear span $[g_m]$ is complemented in $Z$. If $1< p,q<\infty$ then the space $Z_{p,q}$ is reflexive. Moreover, $Z_{p,1}\in DC$ for each $1\le p<\infty$.
\end{Teo}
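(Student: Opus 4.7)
My proposal is to construct $Z_{p,q}$ as a Lions--Peetre real interpolation space of the form $(X_0,X_1)_{\theta,q}$, where the endpoints are symmetric spaces on $[0,1]$ chosen so that the fundamental function of $Z_{p,q}$ satisfies $\phi_{Z_{p,q}}(t)\asymp t^{1/p}$. Natural candidates for the pair are $(\Lambda(\varphi),L_\infty)$, $(\Lambda(\varphi),L_p)$, or $(\Lambda(\varphi),M(\bar\psi))$ for carefully calibrated $\varphi,\psi\in\Omega$; the freedom in this choice is exactly what allows us to separate the behaviour on characteristic functions from the behaviour on a prescribed disjoint sequence of non-characteristic shape. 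The heuristic is that the external summability index $q$ injects an $l_q$-structure that is detected only by specially constructed disjoint functions whose rearrangements occupy distinct ``dyadic levels'' of the $K$-functional decomposition, while characteristic functions of disjoint sets, being indicator step functions after rearrangement, are forced into a different regime dictated by $\phi_{Z_{p,q}}$ and hence behave like $l_p$.

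For the restricted $p$-DH property, given a normalized disjoint sequence $\{\chi_{A_n}/\phi_{Z_{p,q}}(|A_n|)\}$, I would pass to a subsequence with $|A_{n_k}|$ decaying geometrically at a rate matched to the interpolation scale. The Peetre $K$-functional
$$K\Bigl(t,\sum_k c_k\chi_{A_{n_k}}/\phi_{Z_{p,q}}(|A_{n_k}|);X_0,X_1\Bigr)$$
is then computable explicitly, because the rearrangement of the sum is itself a step function, and the external integral $\int_0^\infty(t^{-\theta}K(t,\cdot))^q\,dt/t$ should collapse to a discrete sum equivalent to $(\sum_k|c_k|^p)^{1/p}$, with constants uniform in the subsequence. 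This collapse is the heart of the argument and simultaneously its most delicate step.

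To produce the $l_q$-equivalent disjoint sequence $\{g_m\}$, I would pick $g_m$ supported on sets $E_m$ of geometrically decreasing measure, with $g_m^*$ deliberately non-constant (for instance, step functions with several levels) so that each $g_m$ contributes to a separate level of the $K$-functional decomposition. The external $L_q(dt/t)$-integral then gives $\|\sum_m a_m g_m\|_{Z_{p,q}}\asymp(\sum_m|a_m|^q)^{1/q}$ almost by construction. Complementation of $[g_m]$ follows from a band/conditional-expectation projection onto the algebra generated by $\{E_m\}$, bounded by the lattice property of $Z_{p,q}$. Reflexivity for $1<p,q<\infty$ is automatic because the Lions--Peetre functor preserves reflexivity when applied to reflexive endpoints with $1<q<\infty$, and $Z_{p,1}\in DC$ should follow by reducing an arbitrary disjoint sequence, via a further subsequence extraction, to a sequence of the same shape as $\{g_m\}$ and applying the same band projection. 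The main obstacle is the restricted $p$-DH step: the $K$-functional calculation must really yield the $l_p$-norm for \emph{arbitrary} $\{|A_n|\}$, not just for measures on the interpolation grid, and the endpoints $X_0,X_1$ must be fine enough that, after thinning, the external $l_q$-summation of an indicator step function truly collapses into a term equivalent to the $l_p$-norm of the coefficients.
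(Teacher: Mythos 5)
Your proposal has the roles of the two indices set up in a way that cannot work. In a Lions--Peetre space $(X_0,X_1)_{\theta,r}$, the standard mechanism by which a disjoint sequence becomes equivalent to a unit vector basis is scale separation in the $K$-functional: after thinning, each element lives at its own scale $t_k$, and the external integral $\int_0^\infty (t^{-\theta}K(t,\cdot))^r\,dt/t$ collapses to the $l_r$-norm of the coefficients --- with $r$ the \emph{external} parameter. You propose to take external parameter $q$ and to extract $l_q$-behaviour from your special sequence $\{g_m\}$ by exactly this scale-separation mechanism, while simultaneously claiming that normalized disjoint characteristic functions, treated by the same $K$-functional collapse, yield $l_p$. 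They cannot: both families are disjointly supported step functions occupying separated scales, so both collapses produce the external index. No choice of endpoints among $(\Lambda(\varphi),L_\infty)$, $(\Lambda(\varphi),L_p)$ or $(\Lambda(\varphi),M(\bar\psi))$ will make the indicator collapse give $l_p$ while the $\{g_m\}$ collapse gives $l_q$ when $p\ne q$.

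The paper resolves this by inverting your assignment and by using a completely different mechanism for the $l_q$ sequence. The external parameter is $p$, so that, by \cite[Theorem~4]{A15} (which requires $\lim_{t\to0}\phi_{F_0}(t)/\phi_{Z_{p,q}}(t)=0$), disjoint characteristic functions give $l_p$. The $l_q$ sequence is then obtained \emph{independently of the external parameter}: one constructs endpoints $F_1\subset F_0$ containing a common disjoint sequence $\{g_m\}$ that is equivalent to the unit vector basis of $l_q$ in \emph{both} endpoint spaces and spans a subspace complemented in both by one and the same projection; the Baouendi--Goulaouic theorem then gives that $[g_m]$ in $Z_{p,q}=(F_0,F_1)_{1/2,p}$ is $(l_q,l_q)_{1/2,p}=l_q$. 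Producing such a pair with genuinely different fundamental functions is the real content of the proof: a Marcinkiewicz space $M_0(\bar\psi)$ with $\psi(t)=t^{1/2}\log_2^{1/2}(4/t)$ and a bespoke companion space $E_1$ carrying a common $c_0$-equivalent disjoint sequence, followed by duality to turn $c_0$ into $l_1$ and by $q$-convexification to turn $l_1$ into $l_q$. Your proposal contains no substitute for this construction. The reflexivity and $DC$ claims also need the concrete endpoints (Beauzamy's theorem via weak compactness of the inclusion $F_1\subset F_0$, and a Figiel--Johnson--Tzafriri complementation argument in $\Lambda(\bar\psi)$), but these are secondary to the main gap.
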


\begin{proof}
To do the structure of the proof more understandable and transparent, split it into three parts.

{\it Step 1.} Following an idea of the proof of Theorem~3 from \cite{A99}, we construct two separable symmetric spaces  $E_0$ and $E_1$ on $[0,1]$ with  fundamental functions $\psi$ and $\varphi$, respectively, such that $E_0\subset E_1$, $E_0$ contains a sequence of pairwise disjoint functions $\{v_m\}\subset E_0$, which is equivalent to the unit vector basis of $c_0$ both in $E_0$ and in $E_1$, and
\begin{equation}\label{Eq1}
\lim_{t\to 0}\frac{\varphi(t)}{\psi(t)}=0.
\end{equation}

Take for $E_0$ the space $M_0(\bar{\psi})$ (i.e., the separable part of the Marcinkiewicz space $M(\bar{\psi})$), where 
$\psi(t)=t^{1/2}\log_2^{1/2}({4}/{t})$, $0<t\leq 1$ (recall that $\bar{\psi}(t)=t/\psi(t)$). 
Thus,
$$
\|x\|_{E_0}= \sup_{0<t\leq 1}\frac{\psi(t)}{t}\int_0^tx^*(s)\,ds\;\;\mbox{and}\;\;\lim_{t\to 0}\frac{\psi(t)}{t}\int_0^tx^*(s)\,ds=0.$$
Then, since $\gamma_{\psi}=1/2$, by \cite[Theorem~2.5.3]{KPS} (see also Section~\ref{Intro2}), we have
\begin{equation}\label{Eq2}
||x||_{E_0}\asymp \sup_{0<t\leq 1}(x^*(t){\psi}(t)),
\end{equation}
and from the inequality $x^*(t)\le \frac1t\int_0^t x^*(s)\,ds$, $0<t\le 1$, for every $x\in E_0$ it follows 
\begin{equation}\label{Eq2a}
\lim_{t\to 0}(x^*(t){\psi}(t))=0.
\end{equation}

Let us define the space $E_1$. We put $\alpha_k:=1/\psi(2^{-k})=(k+2)^{-1/2}2^{k/2}$ and $z_k(t):=\alpha_k\chi_{(0,2^{-k}]}(t)$, $k=0,1,\dots$ Moreover, we define the sequence of positive integers $
\{n_m\}_{m=0}^\infty$ such that $n_0=1<n_1<n_2<\dots <n_m<\dots$ by setting
\begin{equation}\label{Eq3}
n_{m}:=\max\Big\{n=1,2,\dots:\,\sum_{k=n_{m-1}}^{n-1}\frac{1}{k+2}\leq 1\Big\},\;\;m=1,2,\dots
\end{equation}
Then, we denote 
$$
w_m(t):=\,\max_{n_m\leq k<n_{m+1}}z_k(t),\;\;m=0,1,\dots$$
Since the sequence $\{\alpha_k\}_{k=0}^\infty$ increases, from \eqref{Eq3} it follows that the $L_2$-norms of the functions $w_m$ satisfy the estimates
$$
||w_m||_2^2\,\geq{\,\sum_{k=n_m}^{n_{m+1}-1}\alpha_k^22^{-k-1}}\,=\,
\frac12\no\frac{1}{k+2}\,\geq\,\frac{1}{4}$$
and
$$
{\mn}^2\,\leq{\,{\no}\alpha_k^22^{-k}}\,=\,\no\frac{1}{k+2}\,\leq\,1,$$
whence
\begin{equation}\label{Eq4}
\frac{1}{2}\leq\mn\leq 1,\;\;m=0,1,\dots
\end{equation}

Further, we denote $\chi_b:=b^{-1/2}\chi_{(0,b)},$ $0<b\le 1$, $\bar{w}_m:=w_m/{||w_m||_2},$ $m=0,1,\dots$, and define the set $V$ as follows
$$
V:=\,\{\chi_b\}_{0<b\leq 1}\bigcup{\{\bar{w}_m\}_{m=0}^{\infty}}.$$
Moreover, let $E_1$ consist of all measurable functions $x(t)$ on
$[0,1]$ such that
\begin{equation}\label{Eq4a}
\lim_{s\to +0}\sup_{v\in V}\int_0^sx^*(t)v(t)\,dt=0.
\end{equation}
Let us show that $E_1$ endowed with the norm
$$
||x||_{E_1}:=\sup_{v\in V}\int_0^1x^*(t)v(t)\,dt$$
is a separable symmetric space on $[0,1]$. As was observed in Section~\ref{Intro2} (see also \cite[Ch.~IV, Theorem~3.3]{KA77}) for this it is sufficient to check that  the norm of $E_1$ is order-continuous.

Suppose that $\{x_{n}\}\subseteq E_1$, $x_{n} \downarrow _n 0$. Then, for each $s>0$ and all sufficiently large $n\in\mathbb{N}$ we have
$$
m(\{t:\,x_n(t)\ge s\})\le s.$$
Therefore, since $\|v\|_2\le 1$ for every $v\in V$, we have
\begin{eqnarray*}
\|x_n\|_{E_1} &=& \sup_{v\in V}\int_0^1x_n^*(t)v(t)\,dt \le \sup_{v\in V}\int_0^sx_n^*(t)v(t)\,dt +s\sup_{v\in V}\int_s^1v(t)\,dt\\
&\le& \sup_{v\in V}\int_0^sx_1^*(t)v(t)\,dt +s
\end{eqnarray*}
whenever $n$ is sufficiently large. Combining this inequality with the fact that $x_1\in E_1$, we see that the right-hand side of the latter inequality tends to zero as $s\to 0$. Consequently,
$$
\lim_{n\to\infty}\|x_n\|_{E_1}=0,$$
and so the norm of $E_1$ is order-continuous.

In addition, from the definition of the norm of $E_1$ it follows that 
$$
||x||_{M(t^{1/2})}\leq{||x||_{E_1}}\leq{||x||_2}.$$
Therefore, $\varphi(t)=t^{1/2}$ is the fundamental function of $E_1$. Thereby the fundamental functions of the spaces $E_0$ and $E_1$ (i.e., the functions $\psi$ and $\varphi$) satisfy condition \eqref{Eq1}.

Let us show that
\begin{equation}\label{Eq5}
E_0\subset E_1.
\end{equation}
Firstly, we check that 
\begin{equation}\label{Eq5a}
\sup_{v\in V}\int_0^1\frac{v(t)}{{\psi}(t)}\,dt<\infty.
\end{equation}
Indeed,
$$
\int_0^1\chi_b(t)\op\,=\,2b^{-1/2}\int_0^b\frac{d(t^{1/2})}{\pq}\leq 2\;\;\mbox{for all}\;\;0<b\leq 1,$$
and, by \eqref{Eq3},
\begin{eqnarray*}
\int_0^1w_m(t)\op &\leq& {\no}\alpha_k\int_0^{2^{-k}}\op=2{\no}\alpha_k\int_0^{2^{-k}}\frac{d(t^{1/2})}{\pq}\\ &\leq&
2{\no}\frac{1}{k+2}\leq\,2.
\end{eqnarray*}
Combining the last inequalities with \eqref{Eq4} yields \eqref{Eq5a}. Now, let $x\in E_0$ be arbitrary. Then, we have
$$
\sup_{v\in V}\int_0^sx^*(t)v(t)\,dt\le \sup_{0<t\le s}(x^*(t){\psi}(t))\cdot \sup_{v\in V}\int_0^1\frac{v(t)}{{\psi}(t)}\,dt.$$
Hence, from \eqref{Eq2a} and \eqref{Eq5a} it follows \eqref{Eq4a},
i.e., $x\in E_1$.
Thus, embedding
\eqref{Eq5} is proved.

Next, we set $D_m:=(2^{-n_{m+1}},2^{-n_m}]$ and
$$
v_m(t):=w_m(t)\chi_{D_m}(t)={\no}\alpha_k\chi_{(2^{-k-1},2^{-k}]}(t),\;\;m=0,1,\dots$$
Clearly, the functions $v_m$, $m=0,1,\dots$, are pairwise disjoint. We  show that the sequence $\{v_m\}$ is equivalent to the unit vector basis of $c_0$ both in $E_0$ and in $E_1$.  

Let  
$$
v(t)=\sum_{m=0}^rc_mv_m(t),\;\;0<t\le 1,$$
where $r\in\mathbb{N}$, $c_m\in\mathbb{R}$, $m=0,1,\dots,r$. Without loss of generality, we can assume that $c_m\geq 0.$
Then, $w(t):={\qr}c_mw_m(t)$ is a non-increasing function on $(0,1]$
and $v(t)\leq w(t).$ Therefore, from \eqref{Eq2} it follows that
$$
||v||_{E_0}\,\leq\,||w||_{E_0}\,\leq{\,C\qr\left\{c_m{\rs}\alpha_k{\psi}(2^{-k})\right\}}.$$
Hence, in view of the fact that $\alpha_k{\psi}(2^{-k})=1$ for $k=0,1,2,\dots,$ we obtain 
\begin{equation}\label{Eq6}
||v||_{E_0}\,\leq{\,C{\qr}c_m}.
\end{equation}

Now, let us estimate the norm $||v||_{E_1}$ from below. By \eqref{Eq3}, for each $m=0,1,\dots, r$ we infer
\begin{eqnarray*}
\int_0^1v_m^*(t)w_m(t)\,dt &\geq& \int_{D_m}w_m^2(t)\,dt=\int_0^1v_m^2(t)\,dt\\ &=&{\no}\alpha_k^22^{-k-1}=\frac{1}{2}\no\frac{1}{k+2}\geq\frac{1}{4}.
\end{eqnarray*}
Combining this together with \eqref{Eq4} and with the definition of the norm in $E_1$, we obtain $||v_m||_{E_1}\geq{1/4}.$  Therefore, 
$$
||v||_{E_1}\,\geq{\,\qr\{c_m||v_m||_{E_1}\}}\,\geq{\,\frac{1}{4}{\qr}c_m},$$
and so, according to \eqref{Eq5} and \eqref{Eq6}, there exists a constant $B>0$ such that, for arbitrary $r\in \mathbb{N}$ and all $c_m\in \mathbb{R}$, we have
\begin{equation}\label{Eq7}
B^{-1}\qr |c_m|\leq{\st}_{E_0}\leq B\qr |c_m|
\end{equation}
and
$$
B^{-1}\qr |c_m|\leq{{\st}_{E_1}}\leq B\qr |c_m|.$$
This completes Step 1.

{\it Step 2.} We apply a simple duality argument. Since the spaces $E_0$ and $E_1$ are separable, the (Banach) dual spaces $E_0^*$ and $E_1^*$ coincide (isometrically) with their K\"{o}the duals $E_0'=(M_0(\bar{\psi}))'=\Lambda(\bar{\psi})$ and $E_1'$, which have the fundamental functions $t^{1/2}\log_2^{-1/2}\frac{4}{t}$ and $t^{1/2}$, respectively \cite[\S\,II.4]{KPS}. Clearly, $E_1'\subset E_0'$. 
 
Let $\{u_m\}\subset E_1'$ be a sequence of pairwise disjoint functions such that $\|u_m\|_{E_1'}\asymp 1$, $m=0,1,2,\dots$, $\int_0^1 v_mu_m\,dt=1$ and  $\int_0^1 v_mu_n\,dt=0$ if $m\ne n$. Let us show that $\{u_m\}$ is equivalent to the unit vector basis of $l_1$ both in $E_0'$ and in $E_1'$. Applying \eqref{Eq7}, we have
\begin{eqnarray*}
\Big\|\sum_{m=0}^\infty c_mu_m\Big\|_{E_0'}&\ge& \sup\Big\{\int_0^1\Big(\sum_{m=0}^\infty c_mu_m\Big)\Big(\sum_{m=0}^\infty d_mv_m\Big)\,dt:\,\Big\|\sum_{m=0}^\infty d_mv_m\Big\|_{E_0}\le 1\Big\}\\
&\ge&  
\sup\Big\{\sum_{m=0}^\infty c_md_m:\,\qr |d_m|\le B^{-1}\Big\}\ge B^{-1}\sum_{m=0}^\infty |c_m|.
\end{eqnarray*}
Therefore, since $E_1'\subset E_0'$, $\|u_m\|_{E_1'}\le C$ and $\|u_m\|_{E_0'}\le MC$ for all $m=0,1,2,\dots$, where $M$ is the  constant of the embedding $E_0\subset E_1$, we have 
\begin{equation}\label{Eq8}
D^{-1}\sum_{m=0}^\infty |c_m|\leq \Big\|\sum_{m=0}^\infty c_mu_m\Big\|_{E_0'}\le D\sum_{m=0}^\infty |c_m|
\end{equation}
and
\begin{equation}\label{Eq9}
D^{-1}\sum_{m=0}^\infty |c_m|\leq \Big\|\sum_{m=0}^\infty c_mu_m\Big\|_{E_1'}\le D\sum_{m=0}^\infty |c_m|
\end{equation}
for some $D>0$ and all $c_m\in\mathbb{R}$.

{\it Step~3.} Given $1\le q<\infty$, we denote by $F_0$ and $F_1$ the $q$-convexification of the space $E_0'$ and $E_1'$, respectively (if $q=1$ we set $F_0=E_0'$ and $F_1=E_1'$) (see e.g. \cite[1.d, p.~53]{LT-79}). 

Clearly, $F_1\subset F_0$. Further, since the functions $u_m$, $m=0,1,2,\dots$, are pairwise disjoint, then so are the functions $g_m:=|u_m|^{1/q}$, $m=0,1,2,\dots$ Furthermore,
by the definition of the $q$-convexification of a space combined with \eqref{Eq8} and \eqref{Eq9}, for all $c_m\in\mathbb{R}$ we infer 
\begin{equation}\label{Eq10}
D\Big(\sum_{m=0}^\infty |c_m|^q\Big)^{1/q}\leq \Big\|\sum_{m=0}^\infty c_mg_m\Big\|_{F_0}\le D\Big(\sum_{m=0}^\infty |c_m|^q\Big)^{1/q}
\end{equation}
and
\begin{equation}\label{Eq11}
D^{-1}\Big(\sum_{m=0}^\infty |c_m|^q\Big)^{1/q}\leq \Big\|\sum_{m=0}^\infty c_mg_m\Big\|_{F_1}\le D\Big(\sum_{m=0}^\infty |c_m|^q\Big)^{1/q}.
\end{equation}

Moreover, since $F_0=\Lambda_q(\bar{\psi})$ (see Section~\ref{Intro2}), then passing to a subsequence if it is necessary, we may assume that the closed linear span $[g_m]$ is complemented in $F_0$ (see e.g. \cite[Theorem~5.1]{FJT-75}). Let $P$ be a linear projection bounded in $F_0$, whose image coincides with $[g_m]$. Since $F_1\subset F_0$ and the sequence $\{g_m\}$ is equivalent to the unit vector of $l_q$ both in $F_0$ and in $F_1$ (see \eqref{Eq10} and \eqref{Eq11}), $P$ is also bounded in $F_1$, and so the subspace $[g_m]$ is complemented in $F_1$ with the same projection.  

Now, given $1\le p<\infty$, we denote by $Z_{p,q}$ the real Lions-Peetre interpolation space $(F_0,F_1)_{1/2,p}$ (see Section 2).

Observe that the fundamental functions of the spaces $F_0$ and $F_1$ are $\phi_{F_0}(t)=t^{1/(2q)}\log_2^{-1/(2q)}\frac{4}{t}$ and $\phi_{F_1}(t)=t^{1/(2q)}$, respectively.  Also, for an arbitrary symmetric space $X$ we have $\phi_{X'}(t)=t/\phi_X(t)$, $0<t\le 1$ \cite[\S\,II.4]{KPS}.
Thus, applying Formula (1) from \cite[\S\,3.5]{BL} two times together with the duality theorem (see e.g. \cite[Theorem~3.7.1]{BL}), we can identify the fundamental function $\phi_{Z_{p,q}}$ as follows
$$
\phi_{Z_{p,q}}(t)\asymp\phi_{F_0}(t)^{1/2}\phi_{F_1}(t)^{1/2}=t^{1/(2q)}\log_2^{-1/(4q)}\frac{4}{t}.$$
Hence,
\begin{equation*}
\lim_{t\to +0}\frac{\phi_{F_0}(t)}{\phi_{Z_{p,q}}(t)}\asymp\lim_{t\to +0}\log_2^{-1/(4q)}\frac{4}{t}=0,
\end{equation*}
and so, according to \cite[Theorem~4]{A15}, every sequence of the form $\{\frac{\chi_{A_k}}{\phi_{Z_{p,q}}(m(A_k))}\}$, where $A_k$, $k=1,2,\dots$, are pairwise disjoint subsets of $[0,1]$, contains a subsequence equivalent in $Z_{p,q}$ to the unit vector basis of $l_p$. As a result, we conclude that $Z_{p,q}$ is a restricted $p$-DH symmetric space. 

On the other hand, since the above projection $P$ is bounded in $F_0$ and $F_1$, then by inequalities \eqref{Eq10} and \eqref{Eq11} combined together with Baouendi-Goulaouic theorem (see e.g. \cite[Theorem~1.17.1]{Tr}), the sequence $\{g_m\}$ is equivalent in $Z_{p,q}$ to the unit vector basis of $l_q$ and the subspace $[g_m]$ is complemented in $Z_{p,q}$. 

Now, let $1<p,q<\infty$. Recall that $F_0=\Lambda_q(\bar{\psi})$. 
Taking into account that $\gamma_{\bar{\psi}}=1/2$, we conclude that $F_0$ is $q$-convex and $r$-concave for each $r>2q$ (see e.g. \cite[Corollary~2]{Nov-82}). Hence, neither $c_0$ nor $l_1$ is lattice embeddable in $F_0$ and so, by Lozanovsky theorem (see \cite{Loz} or \cite[Theorem~4.71]{AB}), $F_0$ is reflexive. Therefore, the canonical embedding of $F_1$ into $F_0$ is weakly compact and, by Beauzamy theorem \cite{Boz}, the space $Z_{p,q}$ is also reflexive. 

It remains to prove the last assertion of the theorem. 

Let $1\le p<\infty$ and let $\{x_n\}_{n=1}^\infty$ be an arbitrary sequence of pairwise disjoint functions from the space $Z_{p,1}$, $\|x_n\|_{Z_{p,1}}=1$, $n=1,2,\dots$ If $\liminf_{n\to\infty}\|x_n\|_{F_0}=0$, then from \cite[Theorem~4 and Remark~2]{A15} it follows at once that there is a subsequence $\{x_{n_k}\}\subset \{x_n\}$, which spans a complemented subspace in $Z_{p,1}$.

Now, we consider the case when $\liminf_{n\to\infty}\|x_n\|_{F_0}>0$, that is, 
\begin{equation}\label{Eq12}
\|x_n\|_{F_0}\asymp\|x_n\|_{Z_{p,1}}=1,\;\;n=1,2,\dots
\end{equation}
Since $q=1$, we have $F_0=E_0'=\Lambda(\bar{\psi})$. Therefore, applying \cite[Theorem~5.1]{FJT-75} once more (see also \cite{Tok}), we can select a subsequence $\{x_{n_k}\}\subset \{x_n\}$, equivalent in $F_0$ to the unit vector basis of $l_1$, such that the subspace $[x_{n_k}]$ is complemented in $F_0$. Since $Z_{p,1}\subset F_0$, then,  by \eqref{Eq12}, $\{x_{n_k}\}$ is equivalent to the unit vector basis of $l_1$ also in $Z_{p,1}$. Hence, if $Q$ is a bounded projection in $F_0$ with the image $[x_{n_k}]$, for all $x\in Z_{p,1}$ we have
$$
\|Qx\|_{Z_{p,1}}\le C\|Qx\|_{F_0}\le C\|Q\|_{F_0\to F_0}\|x\|_{F_0}\le C'\|Q\|_{F_0\to F_0}\|x\|_{Z_{p,1}},$$ 
that is, $Q$ is bounded in $Z_{p,1}$. Summing up, we conclude that the subspace $[x_{n_k}]$ is complemented in $Z_{p,1}$, and the proof is completed.
\end{proof}

Applying Theorem~\ref{main} in the case when $1\le p\ne q<\infty$, we obtain

\begin{car}\label{solution}
For every $1\le p<\infty$ there exists a restricted $p$-DH symmetric space which is not DH.
\end{car}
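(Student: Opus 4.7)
The plan is to deduce the corollary directly from Theorem~\ref{main} by choosing $q\ne p$. Concretely, fix any $q\in[1,\infty)\setminus\{p\}$ (for example $q=p+1$) and apply the theorem to obtain the symmetric space $Z_{p,q}$, which by construction is restricted $p$-DH and contains a sequence of pairwise disjoint functions $\{g_m\}$ equivalent in $Z_{p,q}$ to the unit vector basis of $l_q$. Since a DH space is in particular not-DH, it suffices to exhibit two normalized disjoint sequences in $Z_{p,q}$ with no pair of equivalent subsequences.

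The first such sequence is $\{g_m\}$ itself; being equivalent to the $l_q$-basis, every subsequence is again equivalent to the $l_q$-basis (by perfect symmetry of that basis). For the second sequence, take any sequence $\{A_n\}_{n=1}^\infty$ of pairwise disjoint subsets of $[0,1]$ of positive measure and set $f_n:=\chi_{A_n}/\|\chi_{A_n}\|_{Z_{p,q}}$. By the restricted $p$-DH property of $Z_{p,q}$, a subsequence of $\{f_n\}$ is equivalent to the unit vector basis of $l_p$, and any further subsequence is then equivalent to the $l_p$-basis.

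If $Z_{p,q}$ were DH, then passing to suitable subsequences of $\{f_n\}$ and $\{g_m\}$ one would obtain equivalent sequences; but this would force the standard bases of $l_p$ and $l_q$ to be equivalent, which fails for $p\ne q$ in $[1,\infty)$. Hence $Z_{p,q}$ is not DH. There is essentially no obstacle here — the corollary is an immediate reading of Theorem~\ref{main} together with the elementary non-equivalence of the unit vector bases of $l_p$ and $l_q$ for $p\ne q$.
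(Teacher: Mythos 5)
Your proposal is correct and follows exactly the route the paper intends: the paper derives the corollary in one line by applying Theorem~\ref{main} with $q\ne p$, and your argument simply fills in the (straightforward) details of why the coexistence of a disjoint sequence spanning $l_q$ and disjoint characteristic-function sequences spanning $l_p$ rules out the DH property. The only blemish is the garbled sentence ``a DH space is in particular not-DH,'' which should read that DH requires any two normalized disjoint sequences to contain equivalent subsequences; the mathematics is unaffected.
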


\section{$p$-DH property is preserved under isomorphisms}

\begin{Teo}\label{isomorphic}\label{main2}
Let $1\le p<\infty$ and let $X$ be a symmetric space on $[0,1]$, which is isomorphic to a complemented subspace of a $p$-DH symmetric space $Y$. Then either $X=L_2$ (with equivalence of norms) or $X$ is a $p$-DH space.    
\end{Teo}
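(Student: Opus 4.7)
The plan is as follows. Assume $X \ne L_2$, and let $T\colon X \to Y$ be a Banach isomorphism onto a complemented subspace $T(X) \subset Y$. The goal is to show that every normalized pairwise disjoint sequence $\{x_n\} \subset X$ has a subsequence equivalent in $X$ to the unit vector basis of $l_p$.

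The first and main step is to invoke Kalton's uniqueness theorem \cite[Theorem~7.4]{K-93} (see also \cite[Theorem~5.7]{Tz}) applied to the isomorphism $T$. Since $X$ is a symmetric space on $[0,1]$ different from $L_2$, Kalton's result yields, after passing to a subsequence (still denoted $\{x_n\}$), a pairwise disjoint sequence $\{y_n\}\subset Y$ such that $\{Tx_n\}$ is equivalent in $Y$ to $\{y_n\}$. In other words, the ``disjoint structure'' of $\{x_n\}$ is faithfully transferred into $Y$ up to passing to a subsequence.

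The second step exploits the $p$-DH property of $Y$: the normalization $\{y_n/\|y_n\|_Y\}$ is normalized and pairwise disjoint in $Y$, so a further subsequence is equivalent in $Y$ to the unit vector basis of $l_p$. Composing with $T^{-1}$ transports this equivalence back to $X$, yielding a subsequence of $\{x_n\}$ equivalent in $X$ to the $l_p$-basis. This establishes that $X$ is $p$-DH.

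The principal obstacle is to invoke Kalton's theorem in precisely the form needed above. The $L_2$-exception in the theorem's statement matches exactly the $L_2$-exception in Kalton's uniqueness theorem, and it is this hypothesis that ensures that images of disjoint sequences under a Banach isomorphism between symmetric spaces can be replaced by honest disjoint sequences in the target (up to subsequence and equivalence). The role of the complementation hypothesis is also to place one firmly within the scope of Kalton's theorem, where the isomorphic copy of $X$ sits as a complemented subspace of the ambient symmetric space $Y$. Beyond this transfer, the argument is a soft traverse of the $p$-DH property through the isomorphism.
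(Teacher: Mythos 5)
There is a genuine gap, and it lies in the very first step. Kalton's theorem \cite[Theorem~7.4]{K-93} (see also \cite[Theorem~5.7]{Tz}) does \emph{not} assert that for an arbitrary pairwise disjoint sequence $\{x_n\}\subset X$ the images $\{Tx_n\}$ are, after passing to a subsequence, equivalent to a pairwise disjoint sequence in $Y$. What it gives (under the hypotheses that $X\ne L_2$ and $X$ is complementably embedded in the separable symmetric space $Y$ --- separability of $Y$ being a point you should also justify, via the fact that a non-separable symmetric space contains a disjointly supported copy of $l_\infty$ and hence cannot be $p$-DH for finite $p$) is a statement about the \emph{lattice structure}: one may assume that the Haar basis $\{h_n\}$ of $X$ is equivalent in $X$ to a sequence $\{u_n\}$ of pairwise disjoint functions in $Y$. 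From this one gets that disjointly supported \emph{blocks of the Haar system} in $X$ correspond to disjoint blocks of $\{u_n\}$ in $Y$, and hence every normalized Haar block basis has a subsequence equivalent to the unit vector basis of $l_p$. Your proposal skips the essential bridge from an arbitrary normalized disjoint sequence $\{f_m\}\subset X$ to a Haar block basis. That bridge is the Bessaga--Pe{\l}czy\'nski selection principle \cite[Proposition~1.3.10]{AK}, which applies only after one verifies that $\int_0^1 f_m h_n\,dt\to 0$ for each fixed $n$; this is proved via the estimate $|\int_0^1 f_m h_n\,dt|\le \phi_{X'}(m(\mathrm{supp}\,f_m))$.

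Moreover, this verification can genuinely fail: if $\phi_{X'}(m(\mathrm{supp}\,f_m))$ stays bounded away from $0$, one is forced into the case $X'=L_\infty$, i.e.\ $X=L_1$, which must be handled by a separate argument (the paper shows that then $Y$ is non-reflexive, hence by Lozanovsky's theorem contains a disjointly supported copy of $c_0$ or $l_1$, which together with the $p$-DH assumption forces $p=1$, and $L_1$ is $1$-DH). None of this appears in your sketch, and without it the argument does not close. Your intuition that the $L_2$-exception and the complementation hypothesis exist to make Kalton's theorem applicable is correct, but the theorem you are applying is weaker than the statement you attribute to it, and the real work of the proof is in converting that weaker conclusion into the $p$-DH property for arbitrary disjoint sequences.
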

\begin{proof}
It is well known that every non-separable symmetric space contains a subspace spanned by pairwise disjoint functions, which is isomorphic to $l_\infty$ (see e.g.
\cite[Proposition~1.a.7]{LT-79}). Therefore, since $Y$ is $p$-DH, with $1\le p<\infty$, it is separable. So,
by a deep result by Kalton on uniqueness of rearrangement invariant structures \cite[Theorem~7.4]{K-93} (see also \cite[Theorem~5.7]{Tz}), we can assume that the sequence of Haar functions $\{h_n\}_{n=1}^\infty$ is equivalent in $X$ to some sequence $\{u_n\}_{n=1}^\infty$ of pairwise disjoint functions in $Y$ (let $h_n$ be normalized in $L_\infty$). Let's show  that each normalized block basis of the Haar system contains a subsequence equivalent in $X$ to the unit vector basis $\{e_i\}_{i=1}^\infty$ of $l_p$.

Indeed, let $x_m:=\sum_{k=j_m+1}^{j_{m+1}}a_k^mh_k$, $j_1=0<j_1<j_2<\dots$, $\|x_m\|_X=1$, $m=1,2,\dots$ Then the sequence $\{x_m\}_{m=1}^\infty$ is equivalent in $X$ to the sequence $\{y_m\}_{m=1}^\infty\subset Y$, consisting of pairwise disjoint functions $y_m:=\sum_{k=j_m+1}^{j_{m+1}}a_k^mu_k$, $m=1,2,\dots$ Since $\|y_m\|_Y\asymp 1$, $m=1,2,\dots$, by hypothesis, there is a subsequence $\{y_{m_i}\}$,
which is equivalent in $Y$ to $\{e_i\}$. Therefore, the corresponding subsequence $\{x_{m_i}\}$ is  also equivalent (in $X$) to $\{e_i\}$.
 
Now, suppose that $\{f_{m}\}_{m=1}^\infty$ is an arbitrary  sequence of pairwise disjoint functions in $X$, $\|f_m\|_X=1$, $m=1,2,\dots$ Observe that for each $n=1,2,\dots$ and $m=1,2,\dots$ we have
\begin{equation}\label{int estimate}
\Big|\int_0^1 f_m(t)h_n(t)\,dt\Big|\le
\|f_m\|_X\cdot \|h_n\chi_{{\rm supp}\,f_m}\|_{X'}\le
\phi_{X'}(m({\rm supp}\,f_m)).
\end{equation}
First, suppose that $\phi_{X'}(m({\rm supp}\,f_m))\ge c>0$ for all $m=1,2,\dots$, Then, clearly, $\phi_{X'}(t)\asymp 1$, $0<t\le 1$ (equivalently, $X'=L_\infty$) and so, by \cite[Ch.~II, \S\,4, Formula (4.39)]{KPS}, $\phi_{X}(t)=t/\phi_{X'}(t)\asymp t$, $0<t\le 1$, whence $X=L_1$. Therefore, the space $Y$ is not reflexive and hence, by Lozanovsky theorem (see \cite{Loz} or \cite[Theorem~4.71]{AB}), it contains a subspace spanned by pairwise disjoint functions, which is isomorphic to $c_0$ or $l_1$. Combining this fact with the condition that $Y$ is $p$-DH, $1\le p<\infty$, we conclude that $Y$ is $1$-DH. Since $L_1$ is also $1$-DH, then in this case the theorem is proved.

Let now $\phi_{X'}(m({\rm supp}\,f_m))\to 0$ as $m\to\infty$. Then, from \eqref{int estimate} it follows that  
$$
\lim_{m\to\infty}\int_0^1 f_m(t)h_n(t)\,dt=0$$
 for each $n=1,2,\dots$ It is well known that the Haar system is a basis in every separable symmetric space \cite[Proposition~2.c.1]{LT-79}, and so, in particular, in $X$. Therefore, applying the Bessaga-Pe{\l}czy\'{n}ski Selection Principle \cite[Proposition~1.3.10]{AK}, we can find a subsequence $\{f_{m_i}\}$ equivalent in $X$ to some block basis of the Haar system. Passing once more to a subsequence (and keeping the notation), by the observation from the first part of the proof, we may assume that $\{f_{m_i}\}$ is equivalent in $X$ to the unit vector basis $\{e_i\}$ of $l_p$. Hence, the proof is completed.
\end{proof}

\begin{car}\label{isomorphic1}
(i) If a symmetric space $X$ is isomorphic to a complemented subspace of a 2-DH symmetric space $Y$, then $X$ is also a 2-DH space.

(ii) Let $1\le p<\infty$. If a symmetric space $X$ is isomorphic to a $p$-DH symmetric space $Y$, then $X$ is also a $p$-DH space.
  
\end{car}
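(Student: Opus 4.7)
The plan is to deduce both parts of Corollary~\ref{isomorphic1} directly from Theorem~\ref{main2}, with only a small additional observation needed to dispose of the $L_2$ alternative that appears in the theorem's dichotomy.

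For part (i), I would simply apply Theorem~\ref{main2} with $p=2$. The theorem yields the dichotomy: either $X=L_2$ with equivalence of norms, or $X$ is $2$-DH. In the first case it suffices to note that $L_2$ is itself $2$-DH, since any normalized pairwise disjoint sequence in $L_2$ is orthonormal and hence isometrically equivalent to the unit vector basis of $l_2$. So in either branch $X$ is $2$-DH.

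For part (ii), I would view $X$ as isomorphic to $Y$, which is a complemented subspace of itself via the identity projection, and then invoke Theorem~\ref{main2}. The theorem gives either $X=L_2$ (with equivalent norms) or $X$ is $p$-DH. To handle the first alternative, I would argue as follows: if $X=L_2$, then $Y$, being isomorphic to $X$, is isomorphic as a Banach space to $L_2$, hence to $l_2$. But $Y$ is $p$-DH, so it contains a sequence equivalent to the $l_p$ unit vector basis; consequently $l_p$ embeds isomorphically into $l_2$, which classically forces $p=2$. Thus in this alternative $X=L_2$ is again $2$-DH, i.e., $p$-DH, and we are done.

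Since the substantive work is already packaged into Theorem~\ref{main2}, there is no real obstacle in the corollary itself; the only point that deserves attention is exactly the verification just sketched, namely that the $L_2$ alternative of the dichotomy is harmless, because $L_2$ is either $2$-DH (part (i)) or, in part (ii), incompatible with $Y$ being $p$-DH for $p\neq 2$ by the fact that $l_2$ contains no isomorphic copy of $l_p$ for $p\neq 2$.
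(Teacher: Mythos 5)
Your proposal is correct and follows exactly the route the paper intends: the corollary is stated as an immediate consequence of Theorem~\ref{main2}, and your two observations --- that $L_2$ itself is $2$-DH (normalized disjoint sequences in $L_2$ are isometrically equivalent to the $l_2$ basis), and that in part (ii) the $L_2$ alternative forces $p=2$ since $l_p$ does not embed in $l_2$ for $p\neq 2$ --- are precisely the routine verifications the author leaves to the reader. No gaps.
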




\newpage

\end{document}